\renewcommand\@biblabel[1]{}
\numberwithin{equation}{section}
\newcommand{\beq}{\begin{equation}}
\newcommand{\eeq}{\end{equation}}
\newcommand{\beqs}{\begin{eqnarray*}}
\newcommand{\eeqs}{\end{eqnarray*}}
\newcommand{\beqn}{\begin{eqnarray}}
\newcommand{\eeqn}{\end{eqnarray}}
\newcommand{\beqa}{\begin{array}}
\newcommand{\eeqa}{\end{array}}
\def\lra{\longrightarrow}
\def\bc{\begin{center}}
\def\ec{\end{center}}
\def\begeq{\begin{equation}}
\def\endeq{\end{equation}}
\def\and{\quad{\rm and}\quad}
\let\lra=\longrightarrow
\def\mapright\#1{\,\smash{\mathop{\lra}\limits^{\#1}}\,}
\newtheorem{prop}{Proposition}[section]
\newtheorem{theo}[prop]{Theorem}
\newtheorem{lem}[prop]{Lemma}
\newtheorem{claim}[prop]{Claim}
\newtheorem{cor}[prop]{Corollary}
\newtheorem{defi}[prop]{Definition}
\begin{document}

\date{}
\author {Yuxing $\text{Deng}^{*}$ }
\author { Xiaohua $\text{Zhu}^{**}$}

\thanks {*Partially supported by the NSFC Grants 11701030, **by the NSFC Grants 11331001 and 11771019}
\subjclass[2000]{Primary: 53C25; Secondary: 53C55,
58J05}
\keywords { K\"{a}hler-Ricci flow,     $\kappa$-solutions,   generalized Frankel conjecture}

\address{ Yuxing Deng\\School of Mathematics and Statistics, Beijing Institute of Technology,
Beijing, 100081, China\\
6120180026@bit.edu.cn}

\address{ Xiaohua Zhu\\School of Mathematical Sciences and BICMR, Peking University,
Beijing, 100871, China\\
xhzhu@math.pku.edu.cn}

\title{A note on compact $\kappa$-solutions of K\"{a}hler-Ricci flow}
\maketitle

\section*{\ }

\begin{abstract} In this paper, we give a complete classification of $\kappa$-solutions of K\"{a}haler-Ricci flow on compact complex manifolds. Namely, they must be  quotients of products of irreducible compact Hermitian symmetric manifolds.
\end{abstract}

\section{Introduction}

As an important problem in the analysis of the singularities of Ricci flow, the classification of ancient solutions to Ricci flow has attracted many attentions. 2-dimensional $\kappa$-solutions was classified by Perelman in \cite{Pe1}.  2-dimensional ancient Ricci flows with bounded curvature has also been classified even without the $\kappa$-noncollapsed condition by Daskalopoulos-Hamilton-Sesum (cf. \cite{DS},\cite{DHS}). Very recently, Brendle classfied 3-dimensional noncompact $\kappa$-solutions \cite{Br}. It remains an open problem for higher dimensional $\kappa$-solutions and 3-dimensional compact $\kappa$-solutions. It is also interesting to ask the same question for $\kappa$-solutions to K\"{a}hler-Ricci flow.  For eternal K\"{a}hler-Ricci flows with unformly bounded and nonnegative holomorphic bisectional curvature, the authors  have shown that they must be flat \cite{DZ2,DZ3}. In this paper, we give a complete classification of compact $\kappa$-solutions to K\"{a}haler-Ricci flow by using a non-existence result of steady K\"{a}hler-Ricci soliton in \cite{DZ3}. Namely, we prove

\begin{theo}\label{main theorem}
Any  $\kappa$-solution of K\"{a}hler-Ricci flow must be  a quotient of product of some irreducible compact Hermitian symmetric manifolds.
\end{theo}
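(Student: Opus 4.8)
The plan is to recover the product structure of the underlying complex manifold, reduce to a single irreducible factor, show that factor carries a gradient shrinking K\"ahler-Ricci soliton, and identify it via Mok's solution of the generalized Frankel conjecture; the non-existence of non-flat steady K\"ahler-Ricci solitons from \cite{DZ3} is the tool that excludes all other ancient behaviour. First, a compact $\kappa$-solution $(M,g(t))$, $t\in(-\infty,0]$, is ancient, $\kappa$-noncollapsed, and has uniformly bounded, nonnegative holomorphic bisectional curvature. On the universal cover, the strong maximum principle for the holomorphic bisectional curvature along K\"ahler-Ricci flow (Bando, Mok) together with a de Rham type splitting yields an isometric product $\widetilde g(t)=g_{\CC^{k}}\times\prod_{j}\widetilde g_{j}(t)$ into a flat factor and finitely many factors, each again a $\kappa$-solution with positive holomorphic bisectional curvature. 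The flat factor must be trivial: otherwise, at sufficiently negative times the non-flat directions expand while the flat ones do not, so $M$ collapses at the curvature-adapted scale, contradicting $\kappa$-noncollapsing. Hence each non-flat factor has positive Ricci curvature and, by Bonnet--Myers, is compact and simply connected, so $M$ is a finite quotient of a finite product of compact simply connected $\kappa$-solutions with positive holomorphic bisectional curvature, and it suffices to treat one such factor, again denoted $(M,g(t))$.

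The heart of the argument is to show this irreducible factor is, up to scaling and reparametrization, a gradient shrinking K\"ahler-Ricci soliton. By Cao's Li--Yau--Hamilton inequality for K\"ahler-Ricci flow applied to an ancient solution, the matrix Harnack quantity is nonnegative and the scalar curvature is pointwise nondecreasing in $t$. If the solution were of Type II, i.e. $\sup_{M\times(-\infty,0]}|t|R=\infty$, then a Hamilton type point-selection and dilation at times $t\to-\infty$ --- using $\kappa$-noncollapsing, the uniform curvature bound, and Hamilton's compactness theorem --- would produce a non-flat eternal K\"ahler-Ricci flow with bounded nonnegative holomorphic bisectional curvature attaining the maximum of its Harnack quantity, hence a non-flat steady gradient K\"ahler-Ricci soliton, contradicting \cite{DZ3}. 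Thus the solution is of Type I, and its rescalings as $t\to-\infty$ subconverge to a compact non-flat gradient shrinking K\"ahler-Ricci soliton; a rigidity argument then identifies $(M,g(t))$ itself with that self-similar solution.

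A compact gradient shrinking K\"ahler-Ricci soliton has $c_{1}>0$, hence is Fano and simply connected; together with the positive holomorphic bisectional curvature, Mok's theorem forces $M$ to be biholomorphic to $\PP^{n}$ or to an irreducible compact Hermitian symmetric space, on which the unique gradient shrinking K\"ahler-Ricci soliton is the symmetric K\"ahler-Einstein metric carrying its homothetically shrinking flow. Reassembling the factors of the first step then shows that the original compact $\kappa$-solution is a quotient of a product of irreducible compact Hermitian symmetric manifolds, equipped with the product of their symmetric metrics.

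The main obstacle is the last clause of the second step: upgrading ``has a compact shrinking soliton as its backward limit'' to ``is a shrinking soliton,'' i.e. ruling out genuinely non-self-similar compact ancient solutions, and showing that the backward limit stays compact. This is where the interplay of $\kappa$-noncollapsing, the Harnack inequality, and the steady-soliton non-existence of \cite{DZ3} must be exploited most carefully, while keeping track of the K\"ahler structure and of the possibly degenerating irreducible factors throughout the limiting arguments.
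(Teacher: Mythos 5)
Your overall architecture matches the paper's: exclude Type II behaviour by producing a steady K\"ahler--Ricci soliton via Cao's Harnack/rigidity theorem and contradicting the non-existence result of \cite{DZ3}, then analyse the Type I case through Mok's theorem and backward rescaling limits. However, the step you yourself flag as ``the main obstacle'' --- upgrading ``the backward rescaled limit is a compact shrinking soliton'' to ``$(M,g(t))$ is itself a shrinking soliton (hence each $\mathbb{CP}^{n_i}$ factor carries the Fubini--Study metric)'' --- is precisely the substantive content of the paper's Proposition \ref{prop-type I case}, and your proposal does not supply it. ``A rigidity argument then identifies $(M,g(t))$ with that self-similar solution'' is not an argument; non-self-similar compact ancient solutions with the given curvature conditions are exactly what must be ruled out, and nothing in your sketch does so.

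The paper closes this gap as follows, and these are the ideas missing from your proposal. Compactness of the backward limit and its diffeomorphism type come from Ni's diameter estimate for backward Type I ancient solutions \cite{Ni}; the flat factor $\mathbb{C}^k$ is excluded not by a collapsing estimate but by Lott's theorem that compact shrinking solitons have finite fundamental group \cite{L} (your collapsing sketch is plausible but unproved as stated). On a $\mathbb{CP}^{n}$ factor, the backward limit is a shrinking K\"ahler--Ricci soliton with nonnegative bisectional curvature, hence of constant bisectional curvature by the Tian--Zhu uniqueness theorem (Lemma \ref{uniqueness}); by Cheeger--Gromov convergence this forces $g(t)$ to have \emph{positive} bisectional curvature for $t$ near $-\infty$, and positivity is preserved forward, so $g(t)$ has positive bisectional curvature for all $t$. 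Then Chen--Tian \cite{CT} gives convergence of the normalized forward flow to a constant-curvature metric as well. The decisive point is Perelman's $\nu$-entropy: it is monotone along the flow and has the same value on the backward and forward limits, hence is constant, which forces $(M,g(t))$ to be a gradient shrinking soliton, and Lemma \ref{uniqueness} then identifies it with the Fubini--Study flow. Without the entropy (or some equivalent) monotonicity-sandwich argument, your proof of the theorem is incomplete at its central step.
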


By the definition of compact $\kappa$-solution of K\"{a}hler-Ricci flow $(M,g(t))$ (cf. Definition \ref{def1}), $(M,g(t))$ is a compact K\"{a}hler manifold with nonnegative holomorphic bisectional curvature. The classification of such manifolds is known as the generalized Frankel conjecture. The conjecture was proved by Mok \cite{M}. There are also many works related to the conjecture such as \cite{CaCh}, \cite{G} etc.. The generalized Frankel conjecture is stated as follows.

\begin{theo}\label{Frankel conjecture}
Let $(M,g)$ be an $n$-dimensional compact K\"{a}hler manifold of nonnegative holomorphic bisectional curvature and let $(\widetilde{M},\widetilde{g})$ be the its universal cover. Then, there exist nonnegative integers $k,N_1,\cdots,N_l$ and irreducible compact Hermitian symmetric spaces $M_1,\cdots,M_r$ of $rank\ge2$ such that such that $(\widetilde{M},\widetilde{g})$ is isometrically biholomorphic to
\begin{align}
(\mathbb{C}^k,g_0)\times(\mathbb{CP}^{n_1},g_1)\times\cdots\times(\mathbb{CP}^{n_l},g_l)\times(M_1,h_1)\times\cdots\times(M_r,h_r)
\end{align}
where $g_0$ is the Euclidean metric on $\mathbb{C}^k$, $h_1,\cdots,h_r$ are canonical metrics on $M_1,\cdots,M_r$ and $g_i(1\le i\le l)$ are K\"{a}hler metrics on $\mathbb{CP}^{n_i}$ with nonnegative holomorphic bisectional curvature.
\end{theo}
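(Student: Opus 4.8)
\emph{Proof sketch (following Mok).} The plan is to combine the K\"ahler--Ricci flow with Berger's holonomy classification and a Mori-theoretic characterisation of projective space. Run the K\"ahler--Ricci flow $\partial_t g(t)=-\mathrm{Ric}(g(t))$ on $M$ with $g(0)=g$; a solution exists on a maximal interval $[0,T)$. By Bando (in complex dimension $\le 3$) and Mok (in general) nonnegativity of the holomorphic bisectional curvature is preserved: one applies Hamilton's maximum principle for tensors to the reaction--diffusion equation $\partial_t R=\Delta R+Q(R)$ satisfied by the curvature operator, using the special algebraic form of the quadratic term $Q$ in the K\"ahler case. Thus $(M,g(t))$ has nonnegative holomorphic bisectional curvature for all $t\in[0,T)$.

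Fix $t_0\in(0,T)$ and apply Hamilton's strong maximum principle to the same equation. It shows that the pointwise null space
\[
\mathcal{N}_x=\{\,v\in T^{1,0}_xM:\ R(v,\bar v,w,\bar w)=0\ \text{for all}\ w\in T^{1,0}_xM\,\}
\]
of the holomorphic bisectional curvature is a parallel holomorphic distribution, and likewise the null space of the Ricci form is parallel. By the K\"ahler de Rham decomposition theorem, the universal cover $(\widetilde M,\widetilde g(t_0))$ then splits holomorphically and isometrically into a flat Euclidean factor and finitely many compact, simply connected, de Rham--irreducible, non-flat factors. Since the K\"ahler--Ricci flow preserves product structures and keeps flat factors flat, this splitting holds for every $t\in[0,T)$, in particular at $t=0$, where the factors remain de Rham--irreducible (were one to split at $t=0$, so would the corresponding factor of $\widetilde g(t_0)$):
\[
(\widetilde M,\widetilde g)\ \cong\ (\mathbb{C}^k,g_0)\times(X_1,g_{X_1})\times\cdots\times(X_s,g_{X_s}),
\]
each $X_j$ compact, simply connected, de Rham--irreducible, non-flat, with nonnegative holomorphic bisectional curvature.

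Fix a factor $X=X_j$ and set $m=\dim_{\mathbb{C}}X$. The strong maximum principle for $\mathrm{Ric}$ forces $\mathrm{Ric}(g_X(t))>0$ for $t>0$: otherwise its parallel null space would, by irreducibility, be all of $T^{1,0}X$, i.e.\ $g_X(t)$ would be Ricci-flat, and then vanishing of $\sum_a R(v,\bar v,e_a,\bar e_a)$ together with $R(v,\bar v,w,\bar w)\ge0$ would force the whole curvature tensor to vanish, contradicting non-flatness. Hence the (restricted) holonomy of $X$ is not contained in $SU(m)$, so by Berger's theorem either $\mathrm{Hol}(X)=U(m)$ or $X$ is locally symmetric. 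In the second case $X$, being compact, simply connected and de Rham--irreducible, is an irreducible Hermitian symmetric space of compact type; its metric $g_X$ is then the canonical symmetric metric up to scaling (the isotropy representation being irreducible) and automatically has nonnegative holomorphic bisectional curvature. It is biholomorphic to $\mathbb{CP}^m$ exactly when its rank is $1$; the spaces of rank $\ge 2$ are the factors $M_1,\dots,M_r$ with canonical metrics $h_1,\dots,h_r$.

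It remains to handle $\mathrm{Hol}(X)=U(m)$ (this also absorbs the rank-one symmetric case) and show $X\cong\mathbb{CP}^m$. From $\mathrm{Ric}(g_X(t))>0$ we get $c_1(X)>0$, so $X$ is Fano; by Mori's theorem it is projective and covered by rational curves, and nonnegativity of the holomorphic bisectional curvature bounds the splitting type of $TX$ along such curves from below. A minimal-rational-curve argument --- the characterisation of $\mathbb{CP}^m$ as the unique Fano manifold carrying rational curves of the extremal length forced by this positivity, together with the resolution of the original Frankel conjecture by Siu--Yau and by Mori in the strictly positive case and Mok's strong-maximum-principle control of the degeneracy locus of the bisectional curvature --- then gives $X\cong\mathbb{CP}^m$. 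Collecting these factors among the $(\mathbb{CP}^{n_i},g_i)$, with $g_i$ the induced (a priori only nonnegatively curved) metrics, yields the asserted decomposition. The main obstacle is precisely this last step: unlike the original Frankel conjecture one is not handed strict positivity --- the strong maximum principle only rules out globally null directions --- so one must pass through Mori theory and the geometry of rational curves on Fano manifolds instead of a direct differential-geometric pinching argument.
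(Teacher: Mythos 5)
This statement is Mok's resolution of the generalized Frankel conjecture; the paper does not prove it but simply quotes it with a citation to Mok [M] (with [CaCh] and [G] as related references). So your sketch is being compared against a citation, and what you have written is, in outline, a faithful summary of Mok's actual strategy: preservation of nonnegative holomorphic bisectional curvature under the K\"ahler--Ricci flow, Hamilton's strong maximum principle to make the null spaces parallel, the de Rham decomposition, Berger's holonomy dichotomy for each irreducible factor, and Mori's theory of rational curves to identify the holonomy-$U(m)$ factors with $\mathbb{CP}^m$. As a roadmap this is correct; as a proof it is not self-contained, since the final step (the minimal-rational-curve characterisation of $\mathbb{CP}^m$ under merely nonnegative bisectional curvature) is the technical core of Mok's paper and you only gesture at it.

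There is, however, one genuine logical gap in the way you organise the argument. The strong maximum principle gives you parallel null distributions, hence an isometric de Rham splitting, only for $g(t_0)$ with $t_0>0$; the theorem asserts the splitting for the \emph{initial} metric $g=g(0)$. Your justification for transporting the splitting backwards --- ``the K\"ahler--Ricci flow preserves product structures'' --- only runs forwards in time: if $g(0)$ were a product, uniqueness of the flow would make $g(t_0)$ a product, but that is the wrong direction. Deducing reducibility of the holonomy of $g(0)$ from that of $g(t_0)$ requires backward uniqueness of the Ricci flow (Kotschwar), a deep result not available by the tools you invoke and not part of Mok's argument. The standard route is to obtain the isometric splitting of the initial metric directly, via the Howard--Smyth--Wu structure theorem for compact K\"ahler manifolds of nonnegative bisectional curvature (parallelism of holomorphic forms, the Albanese map, and the harmonic theory of the Ricci form), and to reserve the flow for identifying the biholomorphism and isometry type of the compact irreducible factors. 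The same time-direction issue recurs when you conclude that a factor whose evolved metric is locally symmetric has its \emph{original} metric equal to the canonical symmetric one. You should either restructure the proof around the Howard--Smyth--Wu splitting of $g(0)$ or explicitly invoke backward uniqueness.
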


 With the help of the generalized Frankel conjecture,
we can classify compact backward type I $\kappa$-solutions similar to a result of Ni for $\kappa$-solutions with positive curvature operator  in \cite{Ni} (cf. Proposition \ref{prop-type I case}).  On the other hand, based on our former work \cite{DZ3},  we will  show that  any compact $\kappa$-solution  of K\"{a}hler-Ricci flow must be of Type I (cf. Lemma \ref{lem-flow is type I}).  Thus  Theorem \ref{main theorem} follows from Proposition \ref{prop-type I case} and Lemma \ref{lem-flow is type I}.

\section{Proof of Theorem \ref{main theorem}}
\begin{defi}\label{def1}
A complete K\"{a}hler-Ricci flow $(M, g(t))$ on $t\in (-\infty,0]$ is called ancient if the bisectional curvature of $g(t)$ is bounded
and nonnegative for any $t\in (-\infty, 0]$. A complete K\"{a}hler-Ricci flow $(M, g(t))$ is called a $\kappa$-solution to K\"{a}hler-Ricci flow if it is a $\kappa$-noncollapsed, non-flat ancient solution .
\end{defi}

We first recall a theorem on the character  of Type II limit solutions by Cao in \cite{Ca1}, which is

\begin{theo}\label{character-eternal-solution}
Any simply connected Type II limit solution of  K\"{a}hler-Ricci flow  on a simply connected complex manifold must be a gradient steady K\"{a}hler-Ricci soliton.
\end{theo}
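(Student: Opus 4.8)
The plan is to pin down the precise structure of a Type~II limit solution and then run the K\"{a}hler analogue of Hamilton's characterization of eternal solutions through the differential Harnack inequality, which is exactly the content of \cite{Ca1}. First I would recall that Hamilton's point-picking procedure for Type~II blow-ups, together with his compactness theorem, produces out of a Type~II K\"{a}hler-Ricci flow a complete \emph{eternal} solution $(M,g(t))$, $t\in(-\infty,+\infty)$, with uniformly bounded curvature, and with the defining feature of the Type~II rescaling that the scalar curvature $R$ attains its space-time supremum $\sup_{M\times(-\infty,\infty)}R$ at some point; after a time translation we may take this point to lie on the slice $t=0$. Since a rescaling limit of flows with nonnegative holomorphic bisectional curvature again has nonnegative (and here bounded) holomorphic bisectional curvature, and this is the standing curvature hypothesis built into $\kappa$-solutions (Definition~\ref{def1}), the limit $(M,g(t))$ is a complete eternal K\"{a}hler-Ricci flow with bounded, nonnegative holomorphic bisectional curvature whose scalar curvature has an interior space-time maximum; the simple-connectedness hypothesis enters only to guarantee that the potential function produced below is globally defined.

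The second step is to apply Cao's trace Li--Yau--Hamilton inequality for the K\"{a}hler-Ricci flow with nonnegative bisectional curvature. For an ancient (in particular eternal) solution the usual $1/t$ term is absent, so one obtains a pointwise nonnegative Harnack quantity, schematically
\[
Z(V)\ :=\ \frac{\partial R}{\partial t}\ +\ \nabla_\alpha R\, V^\alpha\ +\ \nabla_{\bar\alpha} R\, \bar V^{\bar\alpha}\ +\ R_{\alpha\bar\beta}\, V^\alpha \bar V^{\bar\beta}\ \ge\ 0
\]
for every $(1,0)$-vector $V$ and every $t$. Minimizing over $V$ (on the locus where the Ricci form is positive definite) gives the scalar quantity $H:=\partial_t R - R^{\alpha\bar\beta}\nabla_\alpha R\,\nabla_{\bar\beta}R\ge 0$. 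At the space-time maximum point $(x_0,0)$ one has $\nabla R=0$, and since it is an interior maximum in the time variable also $\partial_t R=0$; hence $H(x_0,0)=0$, i.e.\ the Harnack quantity vanishes at an interior space-time point.

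The last step is the rigidity. The Harnack quantity satisfies a heat-type differential inequality, so the strong maximum principle applies: from $H\ge0$ and $H(x_0,0)=0$ one concludes $H\equiv0$ on $M\times(-\infty,0]$, and then, using real-analyticity in time of Ricci flow solutions (equivalently, backward uniqueness), $H\equiv0$ on $M\times\mathbb{R}$. Thus equality holds in Cao's Harnack inequality at all times, and the equality discussion --- the K\"{a}hler counterpart of Hamilton's steady-soliton theorem --- shows that the minimizing vector field is $V^\alpha=\nabla^\alpha f$ for a smooth $f$ with $R_{\alpha\bar\beta}=\nabla_\alpha\nabla_{\bar\beta}f$ and $\nabla_\alpha\nabla_\beta f=0$, i.e.\ $(M,g(0),f)$ is a gradient steady K\"{a}hler-Ricci soliton generating $g(t)$. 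I expect the genuine difficulties to be the ones that form the technical heart of \cite{Ca1}: dealing with the locus where the Ricci form degenerates so that the optimization over $V$ and the maximum-principle argument still make sense (handled via a holonomy/de~Rham-type splitting off the flat factor), and extracting the full soliton identities from the equality case of the \emph{matrix} Harnack inequality rather than merely its trace.
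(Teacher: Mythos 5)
The paper does not prove this statement at all: it is quoted verbatim as Cao's theorem from \cite{Ca1}, and the only surrounding discussion is the observation that the space-time maximum of $R$ is used solely to produce a point with $\partial R/\partial t=0$, so the argument extends to ancient solutions (Corollary \ref{cao-theorem-1}). Your sketch is a faithful reconstruction of Cao's actual proof in \cite{Ca1} --- trace Li--Yau--Hamilton inequality, vanishing of the Harnack quantity at the interior space-time maximum, strong maximum principle, and the equality-case rigidity yielding the gradient steady soliton (with simple connectedness giving a global potential) --- so it is correct and takes essentially the same route as the cited source.
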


The Type II limit solution in Theorem \ref{character-eternal-solution} is an eternal solution of K\"{a}hler-Ricci flow with uniformly bounded and nonnegative  bisectional curvature and positive Ricci curvature where the scalar curvature assumes its maximum in space-time. The assumption of  scalar curvature  is only used to make sure that there is a point $(x_0,t_0)$ such that
\begin{align}\label{eq:condition}
\frac{\partial R}{\partial t}(x_0,t_0)=0.
\end{align}
Moreover,  the ancient condition is sufficent  in Cao's argument. Hence, Cao's argument in \cite{Ca1} implies the following corollary.

\begin{cor}\label{cao-theorem-1}
   Let $(M,g(t))$ be an ancient solution of K\"{a}hler-Ricci flow with uniformly bounded nonnegative  bisectional curvature and positive Ricci curvature. If there is a point $(x_0,t_0)$ such that (\ref{eq:condition}) holds,  then it is a gradient steady K\"{a}hler-Ricci soliton.
\end{cor}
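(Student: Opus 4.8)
This statement is, in effect, a by-product of the proof of Theorem~\ref{character-eternal-solution} in \cite{Ca1}: the plan is to re-run Cao's argument and to notice that the Type~II hypothesis is used there only to produce a space--time point at which $\partial R/\partial t$ vanishes, and here that point is furnished directly by (\ref{eq:condition}). The tool is Cao's Li--Yau--Hamilton (Harnack) inequality for the K\"ahler--Ricci flow with bounded nonnegative bisectional curvature: on a time interval $[0,T)$ and for every smooth $(1,0)$-vector field $V$,
\[
\frac{\partial R}{\partial t}+\frac{R}{t}+2\,\mathrm{Re}\big(\nabla_{\alpha}R\,V^{\alpha}\big)+R_{\alpha\bar\beta}V^{\alpha}V^{\bar\beta}\ \ge\ 0 .
\]
Since $(M,g(t))$ is ancient, applying this to the flow restricted to $[-T,0]$ and letting $T\to +\infty$ removes the term $R/(t+T)$, so that
\[
Z(V)\ :=\ \frac{\partial R}{\partial t}+2\,\mathrm{Re}\big(\nabla_{\alpha}R\,V^{\alpha}\big)+R_{\alpha\bar\beta}V^{\alpha}V^{\bar\beta}\ \ge\ 0
\]
for all $(1,0)$-vector fields $V$ and all $t\le 0$.

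Because $\mathrm{Ric}(g(t))>0$, the Hermitian matrix $(R_{\alpha\bar\beta})$ is positive definite, so $V\mapsto Z(V)$ is a strictly convex quadratic whose pointwise minimum is attained at the smooth field $V^{\alpha}=-R^{\alpha\bar\beta}\nabla_{\bar\beta}R$, where $(R^{\alpha\bar\beta})$ is the inverse of $(R_{\alpha\bar\beta})$. Minimizing gives
\[
Q\ :=\ \frac{\partial R}{\partial t}-R^{\alpha\bar\beta}\nabla_{\alpha}R\,\nabla_{\bar\beta}R\ \ge\ 0\qquad\text{on }M\times(-\infty,0].
\]

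Now I would evaluate at $(x_0,t_0)$. By (\ref{eq:condition}) the first term vanishes, so $Q(x_0,t_0)=-R^{\alpha\bar\beta}\nabla_{\alpha}R\,\nabla_{\bar\beta}R(x_0,t_0)\le 0$; combined with $Q\ge 0$ this forces $\nabla R(x_0,t_0)=0$ and $Q(x_0,t_0)=0$. Hence $Q$ attains its least value $0$ at the space--time point $(x_0,t_0)$, which is interior since an ancient flow has no initial slice. Carrying out the usual analysis of the equality case of the Harnack estimate (propagate the minimizing field appropriately, observe that the resulting Harnack quadratic satisfies a heat-type differential inequality, and apply the strong maximum principle on $M\times(-\infty,t_0]$), one concludes that $Q\equiv 0$ there and that every nonnegative error term forced by the equality vanishes identically. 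As in Cao's computation, this pins down the minimizing field: it is holomorphic and is the $(1,0)$-part of $\nabla f$ for some smooth real function $f$ with
\[
R_{\alpha\bar\beta}+\nabla_{\alpha}\nabla_{\bar\beta}f\ =\ 0 .
\]
Therefore $(M,g(t))$ is a gradient steady K\"ahler--Ricci soliton for $t\le t_0$; since a steady soliton generates an eternal solution and the K\"ahler--Ricci flow with bounded curvature is uniquely determined forward in time by its initial data, the same structure propagates to all of $(-\infty,0]$.

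The routine parts here are the optimization over $V$ and the forward propagation; the only genuinely delicate step is the equality analysis of the Harnack quadratic in the K\"ahler setting, but this is precisely the computation Cao performs in \cite{Ca1}, where (as he remarks) the ancient condition is already all that is used.
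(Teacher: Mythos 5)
Your proposal is correct and is essentially the paper's own argument: the authors prove the corollary precisely by observing that in Cao's proof of Theorem~\ref{character-eternal-solution} the space-time maximum of $R$ is used only to produce a point where $\partial R/\partial t=0$, and that the ancient condition suffices to run the Harnack/strong-maximum-principle analysis. Your write-up simply makes explicit the steps (limit of the LYH quadratic, minimization over $V$ using $\mathrm{Ric}>0$, equality case, forward propagation by uniqueness) that the paper delegates to \cite{Ca1}.
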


Now, we introduce the notation of backward type I Ricci flow.
\begin{defi}
An ancient solution $(M,g(t))$ defined on $M\times(-\infty,A)$ is called of backward type I if there exists a constant $C>0$ such that
\begin{align}
|{\rm Rm}|(x,t)\le \frac{C}{|t|}, ~\forall~t\le t_0,
\end{align}
where $t_0<0$ is a constant.
\end{defi}

\begin{lem}\label{lem-flow is type I}
Suppose that  $(M,g(t))$ is a $\kappa$-solution of K\"{a}haler-Ricci flow on compact complex manifold $M$. Then, $(M,g(t))$ is a backward type I $\kappa$-solution.
\end{lem}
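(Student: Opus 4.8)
The plan is to argue by contradiction: suppose $(M,g(t))$ is a compact $\kappa$-solution that is \emph{not} of backward type I. This means that along some sequence of times $t_i \to -\infty$, the quantity $|t_i|\,\max_M |{\rm Rm}(\cdot,t_i)|$ is unbounded. The natural move is to rescale. Pick points $(x_i,t_i)$ that (approximately) realize the space-time curvature maximum on $M\times(-\infty,t_i]$ in a suitable "Type II" sense — i.e. choose $(x_i,t_i)$ maximizing a function like $(t - (2t_i))(-t)\,|{\rm Rm}|$ on the slab $M\times[2t_i,t_i]$, the standard Hamilton point-picking for producing an eternal limit. Then dilate the flow by the curvature at $(x_i,t_i)$ and translate so that these points sit at the space-time origin. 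Because $M$ is compact and the diameters collapse under blow-up while the curvature stays $O(1)$ on larger and larger time-intervals, Hamilton–Cao compactness (using $\kappa$-noncollapsing to get a non-degenerate limit) yields a limit ancient — in fact eternal — K\"ahler-Ricci flow $(M_\infty, g_\infty(t))$ with uniformly bounded nonnegative bisectional curvature, and by the point-picking the scalar (equivalently $|{\rm Rm}|$) curvature attains its space-time maximum at the origin, so in particular $\frac{\partial R}{\partial t} = 0$ there.

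The next step is to upgrade this limit to a steady soliton and derive the contradiction. First one should check the limit is non-flat: the failure of backward type I forces the rescaled curvatures not to decay, so $|{\rm Rm}|_{g_\infty}(0,0) = 1 \neq 0$. One also needs positive Ricci curvature on $M_\infty$; this follows from the strong maximum principle / Hamilton's dimension-reduction of the holonomy: on a non-flat K\"ahler-Ricci flow with nonnegative bisectional curvature the null space of the Ricci form is parallel and gives a local de Rham splitting off a flat factor, so after passing to the de Rham factor carrying the curvature we may assume ${\rm Ric} > 0$ (and pass to the universal cover to make it simply connected). Now Corollary \ref{cao-theorem-1} applies: the limit is a gradient steady K\"ahler-Ricci soliton, nontrivial because it is non-flat. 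But by the authors' non-existence result for steady K\"ahler-Ricci solitons in \cite{DZ3} (a non-flat complete steady K\"ahler-Ricci soliton with bounded nonnegative bisectional curvature cannot arise as such a blow-up limit — or more precisely, the relevant class of steady solitons is ruled out), this is impossible. Hence no such sequence $t_i$ exists, and $(M,g(t))$ is backward type I.

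I would organize it as: (1) reduce to producing an eternal limit via Type II point-picking on slabs going back to $-\infty$; (2) apply Hamilton–Cao compactness with $\kappa$-noncollapsing to extract $(M_\infty, g_\infty(t))$; (3) verify the hypotheses of Corollary \ref{cao-theorem-1} — bounded nonnegative bisectional curvature, and positive Ricci after splitting off the flat factor and passing to the universal cover, plus a point with $\partial_t R = 0$ coming from the point-picking — to conclude $g_\infty$ is a gradient steady K\"ahler-Ricci soliton; (4) invoke \cite{DZ3} to rule out a non-flat such soliton, contradicting non-flatness of the limit.

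The main obstacle is Step (2)–(3): making the blow-up limit genuinely \emph{eternal} rather than merely ancient, and ensuring it is non-flat. On a compact manifold the naive blow-up at $t_i \to -\infty$ could a priori converge to something with curvature concentrating in a way that the limit degenerates; the Type II point-picking on the expanding slabs $[2t_i, t_i]$ is exactly what forces the limit flow to be defined for all $t\in(-\infty,\infty)$ with curvature bounded by the value at the origin. One must be careful that the hypothesis "not backward type I" indeed produces the $\sup$-of-$(t-2t_i)(-t)|{\rm Rm}|$ blowing up faster than the parabolic rescaling would tolerate, which is the standard but slightly delicate Type I vs. Type II dichotomy argument, here run "at infinity" in the time direction. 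The rest — reducing to positive Ricci via the de Rham/holonomy splitting and then quoting Corollary \ref{cao-theorem-1} and \cite{DZ3} — is comparatively routine once the eternal non-flat limit is in hand.
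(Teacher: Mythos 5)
Your argument is correct in outline and reaches the same contradiction, but by a genuinely different route at the key step. You run Hamilton's Type II(b) point-picking ``at $t=-\infty$'' to produce an \emph{eternal} limit on which $R$ attains its space-time maximum, and then invoke Cao's rigidity (Theorem \ref{character-eternal-solution}). The paper avoids constructing an eternal limit altogether: writing $M(t)=\sup_M R(\cdot,t)$, it uses Brendle's trick --- if $\partial_t(-1/R)\ge\varepsilon$ held at every curvature-maximizing point for all $t\le 0$, then $\frac{d}{dt}M(t)^{-1}\le -\varepsilon$, which integrates to $(-t)M(t)\le 1/\varepsilon$ and contradicts the failure of the Type I bound --- to extract maximizing points $(p_i,t_i)$, $t_i\to-\infty$, with $\partial_t R\cdot R^{-2}(p_i,t_i)\to 0$. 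The blow-up at these points is then only an \emph{ancient} $\kappa$-solution, but it carries a point with $R_\infty=1$ and $\partial_t R_\infty=0$, which is exactly what Corollary \ref{cao-theorem-1} (as opposed to Theorem \ref{character-eternal-solution}) is formulated to accept. From there the two arguments coincide: positive Ricci via Cao's dimension reduction, pass to the universal cover, conclude a nontrivial gradient steady K\"{a}hler-Ricci soliton, and contradict \cite{DZ3}. The paper's route buys precisely the step you yourself flag as delicate: no two-sided point-picking and no need to show the rescaled lifespans exhaust the whole real line. If you keep your route, two details need repair: the weight $(t-2t_i)(-t)$ on the slab $[2t_i,t_i]$ is mismatched, since the factor $(-t)$ vanishes at $t=0$ rather than at the slab's right endpoint $t_i$, so as written it does not force the forward rescaled existence time to infinity; use Hamilton's standard choice of slabs $[T_j,0]$ with weight $(t-T_j)(-t)R$. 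Also, diameters \emph{expand} rather than collapse under blow-up, so the limit may well be noncompact --- harmless here, but the statement should be corrected.
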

\begin{proof}
We prove by contradiction. Let $M(t)=\sup_{x\in M}R(x,t)$. If the lemma is not true, then
\begin{align}\label{type I condition}
\limsup_{t\to-\infty}(-t)M(t)=\infty.
\end{align}

\begin{claim}
 Under the assumption of (\ref{type I condition}), there exist a time sequence $t_i\to-\infty$ such that
\begin{align}\label{quotient to zero}
\lim_{i\to\infty}\frac{\partial}{\partial t}R(p_i,t_i)\cdot R^{-2}(p_i,t_i)=0,
\end{align}
where $p_i$ assume the maximum of $R(x,t)$ at time slice $t=t_i$, i.e., $R(p_i,t_i)=M(t_i)$.
\end{claim}

We will use a trick in \cite{Br} to prove the claim. Note that $\frac{\partial}{\partial t}R\ge0$ by Harnack inequality. If the claim is not true, then  there exists  some constant $\varepsilon>0$ such that
\begin{align}
\frac{\partial}{\partial t}\Big(-\frac{1}{R(p,t)}\Big)\ge \varepsilon>0, ~\forall ~t\le0,
\end{align}
for all $p\in M$ that satifies $R(p,t)=M(t)$.   Let $F(t)=M(t)^{-1}$ and $G(p,t)=R(p,t)^{-1}$. For $\Delta t>0$, we have
\begin{align*}
\frac{d}{dt}F(t)=&\lim_{\Delta t\to0}\frac{F(t+\Delta t)-F(t)}{\Delta t}\\
\le&\lim_{\Delta t\to0}\frac{G(p,t+\Delta t)-G(p,t)}{\Delta t}\\
\le&-\varepsilon.
\end{align*}
It follows that
\begin{align}
\limsup_{t\to-\infty}(-t)M(t)\le \frac{1}{\varepsilon}.
\end{align}
It contradicts to (\ref{type I condition}). We complete the proof of the claim.

Let $(p_i,t_i)$ be a sequence of time-space points such that $R(p_i,t_i)=M(t_i)$ and $t_i\to-\infty$. By the claim, we also assume that $(p_i,t_i)$ satisfying the condition (\ref{quotient to zero}). Now, we consider rescaled  K\"{a}hler-Ricci flows $(M,g_i(t),p_i)$, where $g_i(t)=M(t_i)g(M(t_i)^{-1}t+t_i)$. Since $g_i(t)$ is uniformly bounded for $t\le0$ and is $\kappa$-noncollapsed, $(M,g_i(t),p_i)$ converges   to an ancient $\kappa$-solution $(M_{\infty},g_{\infty} (t), p_{\infty})$ in Cheeger-Gromov topology.   Moreover,  by  (\ref{quotient to zero}), we have
\begin{align}\label{limit zero}
\frac{\partial}{\partial t}R_{\infty}(p_{\infty},0)=&\lim_{i\to\infty}\frac{\partial [M(t_i)^{-1}R(p_i,M(t_i)^{-1}t+t_i)]}{\partial t}|_{t=0}\\
=&\lim_{i\to\infty}M(t_i)^{-2}\frac{\partial [R(p_i,M(t_i)^{-1}t+t_i)]}{\partial (M(t_i)^{-1}t)}|_{t=0}\notag\\
=&\lim_{i\to\infty}\frac{\partial}{\partial t}R(p_i,t_i)\cdot R^{-2}(p_i,t_i)\notag\\
=&0\notag,
\end{align}
and
\begin{align}\label{limit curvature}
R_{\infty}(p_{\infty},0)=1.
\end{align}
Note that by Theorem 2.1 in \cite{Ca2}, we may assume that $(M_{\infty},g_{\infty}(t),p_{\infty})$ has positive Ricci curvature. Thus,  by Corollary \ref{cao-theorem-1} together   with  (\ref{limit curvature}) and  (\ref{limit zero}),   the universal cover of $(M_{\infty},g_{\infty}(t))$ must be a $\kappa$-noncollpased and non-trivial steady K\"{a}hler-Ricci soliton. However, such a steady Ricci soliton doesn't exist by \cite{DZ3}.  Therefore,  (\ref{type I condition}) could not hold  and  we  prove the lemma.
\end{proof}

We need the following uniqueness result of   shrinking K\"{a}hler-Ricci solitons on $\mathbb{CP}^n$.

\begin{lem}\label{uniqueness}
If $(\mathbb{CP}^n,g)$ is a shrinking K\"{a}hler-Ricci soliton with nonnegative holomorphic bisectional curvature, then $g$ has constant holomorphic bisectional curvature.
\end{lem}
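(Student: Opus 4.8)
The plan is to prove that $g$ is, up to scaling and a biholomorphism of $\mathbb{CP}^n$, the Fubini--Study metric, which has constant holomorphic bisectional curvature $R_{i\bar{j}k\bar{l}}=c\,(g_{i\bar{j}}g_{k\bar{l}}+g_{i\bar{l}}g_{k\bar{j}})$ with $c>0$. A shrinking K\"ahler--Ricci soliton satisfies $R_{i\bar{j}}+\nabla_i\nabla_{\bar{j}}f=\lambda g_{i\bar{j}}$ for some $\lambda>0$ and some smooth real $f$; on $\mathbb{CP}^n$ the soliton is automatically of gradient type because $H^1(\mathbb{CP}^n,\mathcal{O})=0$ forces every holomorphic vector field to carry a potential. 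Passing to cohomology classes gives $[\omega_g]=\tfrac{2\pi}{\lambda}\,c_1(\mathbb{CP}^n)$, so after replacing $g$ by $\tfrac{\lambda}{2\pi}g$ --- which changes neither the soliton equation, nor the nonnegativity of the bisectional curvature, nor the conclusion --- we may assume $\omega_g\in 2\pi c_1(\mathbb{CP}^n)$ and $\lambda=1$. It then suffices to show that $g$ is K\"ahler--Einstein: by the Bando--Mabuchi uniqueness theorem, a K\"ahler--Einstein metric in $2\pi c_1(\mathbb{CP}^n)$ is $\sigma^*\omega_{FS}$ for some $\sigma\in\mathrm{Aut}(\mathbb{CP}^n)$, and $\omega_{FS}$ has constant holomorphic bisectional curvature.

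The main step is thus to show that the soliton vector field $X=\nabla^{1,0}f$ vanishes. The quickest route is the uniqueness theorem for K\"ahler--Ricci solitons of Tian--Zhu: the Fubini--Study metric lies in $2\pi c_1(\mathbb{CP}^n)$ and, being K\"ahler--Einstein, is a K\"ahler--Ricci soliton with trivial soliton field; uniqueness then forces the soliton field of $g$ to equal that of $\omega_{FS}$, namely $0$, hence $g$ is K\"ahler--Einstein. Alternatively, using the bisectional curvature hypothesis directly, one notes that the soliton generates a self-similar K\"ahler--Ricci flow with nonnegative bisectional curvature for all $t\le 0$, and then feeds the trace identity $R+\Delta f=n$, a Bochner-type identity of the shape $\Delta_f R=aR-b|\mathrm{Ric}|^2$ (with $a,b>0$ and $\Delta_f u:=\Delta u-\langle\nabla f,\nabla u\rangle$), the inequality $|\mathrm{Ric}|^2\ge R^2/n$, and the curvature sign into the strong maximum principle in order to force $\mathrm{Ric}=\tfrac{R}{n}g$ with $R$ constant.

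I expect the main obstacle to be exactly this middle step, that is, ruling out a nontrivial soliton vector field. Invoking the Tian--Zhu uniqueness theorem settles it immediately; a genuinely self-contained argument via the bisectional curvature requires a careful extremal and Bochner-type analysis, and the delicate point is to verify that the information obtained at the extrema of $R$, together with the sign of the curvature, really pins down the Einstein equation rather than merely bounding the scalar curvature.
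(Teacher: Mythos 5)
Your primary route --- invoke the Tian--Zhu uniqueness theorem for K\"ahler--Ricci solitons on compact manifolds, observe that the Fubini--Study metric is a soliton with trivial vector field in $2\pi c_1(\mathbb{CP}^n)$, and conclude that $g$ is $\sigma^*\omega_{FS}$ up to scaling --- is correct, and it is in fact exactly the one-line argument the paper acknowledges in the first sentence of its proof (``the lemma is already known by the uniqueness of K\"ahler--Ricci solitons on compact manifolds, cf.\ [TZ1, TZ2]''). Note that [TZ1] gives uniqueness modulo automorphisms for a fixed soliton vector field, and it is [TZ2] that pins down the vector field itself (up to conjugation) via a holomorphic invariant; you need both, but together they do settle it, and your route does not even use the curvature hypothesis. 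The paper then deliberately gives a \emph{different} proof, which is the content you did not reproduce: it runs the self-similar flow $g(t)=\phi_t^*g$, uses Cao's dimension-reduction result to get positive Ricci curvature, splits into the symmetric case (Bando--Mabuchi) and the non-symmetric case where holonomy $U(n)$ plus Gu's argument upgrades to \emph{positive} bisectional curvature, and then appeals to Chen--Tian convergence of the flow to a constant-curvature metric; self-similarity then forces $g$ itself to have constant curvature. That proof trades the heavy uniqueness machinery of [TZ1, TZ2] for flow convergence results and genuinely uses the curvature sign. One caution about your sketched ``alternative'' Bochner/maximum-principle argument: as written it uses only the local identity $\Delta_f R = aR - b|\mathrm{Ric}|^2$ and no global input, and such an argument cannot succeed on its own, since non-Einstein compact shrinking K\"ahler--Ricci solitons with positive Ricci curvature exist (Koiso--Cao on $\mathbb{CP}^2\#\overline{\mathbb{CP}^2}$); the extremal analysis only bounds $R$ and does not force $\mathrm{Ric}=\tfrac{R}{n}g$, so that branch should be dropped or substantially reinforced with the topology of $\mathbb{CP}^n$.
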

\begin{proof}
The lemma is already known by the uniqueness of K\"{a}hler-Ricci solitons on compact manifolds (cf. \cite{TZ1,TZ2}). In the following, we give another proof. We may assume  that $(\mathbb{CP}^n,g,f)$ is a shrinking K\"{a}hler-Ricci soliton for some smooth function $f\in C^{\infty}(\mathbb{CP}^n)$. Let $g(t)=\phi^{*}_tg$ and $\phi_t$ is generated by $-\nabla f$. We first note that the $(\mathbb{CP}^n,g(t))$ has positive Ricci curvature by Theorem 2.1 in \cite{Ca2}. If $(\mathbb{CP}^n,g)$ is symmetric, then it is K\"{a}hler-Eistein by the K\"{a}hler-Ricci soliton equation. Hence, $g$ has constant holomorphic bisectional curvature by the uniqueness of K\"{a}hler-Einstein metric (cf. \cite{BM}). If $(\mathbb{CP}^n,g)$ is not symmetric, then it must be irreducible and the holonomy group ${\rm Hol}(g(t))=U(n)$. By the argument in the proof of Theorem 1.2 in \cite{G}, we see that $(\mathbb{CP}^n,g(t))$ has positive holomorphic bisectional curvature.  By \cite{CT}, we see that $g(t)$ converge to a K\"{a}hler-Einstein metric with positive holomorphic bisectional curvature as $t\to\infty$. By  the convergence of $g(t)$ and the definition of $g(t)$,  we conclude that $g$ has constant holomorphic bisectional curvature. We complete the proof.
\end{proof}

By  Theorem \ref{Frankel conjecture}   and Lemma \ref{uniqueness},  we  prove

\begin{prop}\label{prop-type I case}
 Any backward Type I $\kappa$-solution of K\"{a}haler-Ricci flow on compact complex manifolds must be a quotient of products of irreducible compact Hermitian symmetric manifolds.
\end{prop}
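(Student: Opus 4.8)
The plan is to combine the structure theorem for compact Kähler manifolds with nonnegative holomorphic bisectional curvature (Theorem \ref{Frankel conjecture}) with a soliton-type rigidity argument, following the strategy of Ni \cite{Ni} in the positive curvature operator case but adapted to the Kähler setting via Lemma \ref{uniqueness}. Let $(M,g(t))$ be a backward Type I $\kappa$-solution. First I would pass to the universal cover $(\widetilde M,\widetilde g(t))$, which is again a $\kappa$-solution (the $\kappa$-noncollapsing and the Type I bound are inherited), and note that it suffices to prove the statement for $\widetilde M$; the flow on each time slice splits as a product according to Theorem \ref{Frankel conjecture}. The key point is that the Kähler-Ricci flow preserves the de Rham--type decomposition: each irreducible factor evolves by its own Kähler-Ricci flow, and the Euclidean factor $\mathbb{C}^k$ stays flat. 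So one reduces to analyzing each factor separately: a flat factor is trivial; an irreducible Hermitian symmetric factor of rank $\ge 2$ is already Einstein, hence evolves only by scaling and is itself a (shrinking) soliton, contributing exactly the desired type of factor; the only remaining case to rule out is a $\mathbb{CP}^{n_i}$ factor that is \emph{not} the round Fubini-Study metric.

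For the $\mathbb{CP}^n$ factors I would use the Type I hypothesis to extract a backward blow-down limit. Rescaling by $(-t)$ as $t\to-\infty$ and using the Type I curvature bound together with $\kappa$-noncollapsing, the rescaled flows subconverge in the Cheeger-Gromov sense to a non-flat ancient $\kappa$-solution; by Perelman's monotonicity/reduced volume argument (as in \cite{Ni}) this blow-down limit is a nontrivial gradient shrinking Kähler-Ricci soliton. Because the limit arises from blowing down a product whose $\mathbb{CP}^{n_i}$ factor is compact, the shrinking soliton one obtains on that factor is again a shrinking Kähler-Ricci soliton structure on $\mathbb{CP}^{n_i}$ with nonnegative holomorphic bisectional curvature. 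Then Lemma \ref{uniqueness} forces this limit soliton to have constant holomorphic bisectional curvature, i.e.\ to be Fubini-Study. Finally, the equality case in the monotonicity (the flow is a self-similar shrinker whenever its blow-down is), or alternatively a direct argument that a Type I ancient solution whose backward limit is the round $\mathbb{CP}^n$ must itself be the round $\mathbb{CP}^n$-soliton, upgrades this to the conclusion that the original $\mathbb{CP}^{n_i}$ factor of $g(t)$ has constant holomorphic bisectional curvature for every $t$. Hence every factor is an irreducible compact Hermitian symmetric space with its canonical metric (round $\mathbb{CP}^{n_i}$ included, as rank-one), and $\widetilde M$ — hence $M$ — is a quotient of a product of such spaces.

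The main obstacle is the last upgrading step: knowing that the \emph{blow-down} limit of a $\mathbb{CP}^n$ factor is the round metric does not immediately tell us that the factor was round for all $t$, since a priori the flow could be non-self-similar. The cleanest way around this is to invoke the rigidity in Perelman's entropy/reduced-volume monotonicity: for a $\kappa$-solution on a compact manifold the reduced volume is constant (it equals its limiting value as $\tau\to\infty$, which is that of the asymptotic shrinker), and constancy of the reduced volume forces the solution to be the gradient shrinking soliton itself. Thus the $\mathbb{CP}^{n_i}$ factor \emph{is} a shrinking Kähler-Ricci soliton, and Lemma \ref{uniqueness} applies directly to it rather than only to the limit — giving constant holomorphic bisectional curvature on the nose. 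One must be slightly careful that this monotonicity argument is applied factor-by-factor, which is legitimate since the entropy and reduced volume of a product are the sums of those of the factors, so rigidity for the product forces rigidity for each factor.
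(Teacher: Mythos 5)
Your overall strategy coincides with the paper's: apply Theorem \ref{Frankel conjecture} to split the universal cover, handle the rank $\ge 2$ Hermitian symmetric factors trivially, take a backward blow-down to produce a compact shrinking K\"ahler--Ricci soliton, and use Lemma \ref{uniqueness} on the $\mathbb{CP}^{n_i}$ factors. However, there are two genuine gaps, one of them serious.

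First, the smaller gap: you dismiss the Euclidean factor with ``a flat factor is trivial,'' but the conclusion of the proposition requires $k=0$ (a quotient of \emph{compact} Hermitian symmetric factors has no $\mathbb{C}^k$ factor in its universal cover), so this case must actually be excluded, not just observed to be static. The paper does this by noting that the blow-down limit is a \emph{compact} shrinking soliton diffeomorphic to $M$ (using Ni's diameter estimate), whose fundamental group is therefore finite by Lott's theorem, whereas $k\ge 1$ would force $\pi_1(M)$ to be infinite.

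Second, and more importantly, your upgrading step from ``the blow-down limit is Fubini--Study'' to ``$g(t)$ itself is Fubini--Study'' rests on the claim that for a compact $\kappa$-solution the reduced volume is constant in $\tau$. That claim is false: the reduced volume of a $\kappa$-solution is strictly decreasing unless the solution is already a gradient shrinking soliton (Perelman's compact ancient ovals on $S^3$ are a standard counterexample to constancy). Knowing only that $\tilde V(\tau)$ \emph{tends to} the value of the asymptotic shrinker as $\tau\to\infty$ gives you one endpoint of a monotone quantity, which forces nothing. The paper closes this gap with a two-sided argument that you are missing: from the Cheeger--Gromov convergence to the round limit, $g_{\tau_i}(t)$ has \emph{positive} holomorphic bisectional curvature for large $\tau_i$, and since positivity of bisectional curvature is preserved forward in time, $g(t)$ has positive bisectional curvature for all $t$; then Chen--Tian's convergence theorem \cite{CT} shows the flow also converges (after rescaling) to a constant-curvature metric at the finite \emph{forward} singular time. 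Only because the monotone entropy $\nu(M,g(t))$ takes the same value at both the backward and forward limits can one conclude it is constant, hence that $(M,g(t))$ is a shrinking soliton, to which Lemma \ref{uniqueness} then applies. Without the forward-time endpoint your monotonicity argument does not close.
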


\begin{proof}
Let $(M,g(t))$ be a backward Type I $\kappa$-solution of K\"{a}haler-Ricci flow on compact complex manifold $M$.  By Theorem \ref{Frankel conjecture}, the universal cover of $(M,g(0))$ is holomorphicly isometric to
\begin{align}
(\mathbb{C}^k,g_0)\times(\mathbb{CP}^{n_1},g_1)\times\cdots\times(\mathbb{CP}^{n_l},g_l)\times(M_1,h_1)\times\cdots\times(M_r,h_r)
\end{align}
We only need to show that $k=0$ and $g_i$ is holomorphicly isometric to the Fubini-Study metric on $\mathbb{CP}^{n_i}$ for $1\le i\le l$.

We first show that $k=0$. Fix $p\in M$, for any $\tau_i\to\infty$, let $g_{\tau_i}(t)=\tau_i^{-1}g(\tau_i t)$. Then, $(M,g_{\tau_i}(t),p)$ will converge to a shrinking Ricci soliton $(M_{\infty},g_{\infty}(t),p_{\infty})$ by taking a subsequence if necessary  (cf. \cite{Na}).  By the diameter estimate in \cite{Ni}, we see that $(M_{\infty},g_{\infty}(t))$ is compact for $t<0$ and $M_{\infty}$ is diffeomorphic to $M$. Hence, $M_{\infty}$ is covered by
\begin{align}
(\mathbb{C}^k,g_0)\times(\mathbb{CP}^{n_1},g_1)\times\cdots\times(\mathbb{CP}^{n_l},g_l)\times(M_1,h_1)\times\cdots\times(M_r,h_r)
\end{align}
Therefore, the fundamental group of $M_{\infty}$ is infinite if $k\ge 1$. However, the fundamental group of compact shrinking Ricci soliton $M_{\infty}$ is finite (cf. \cite{L}). Hence, we get $k=0$.

Now, we are left to show that $g_i$ is holomorphicly isometric to the Fubini-Study metric on $\mathbb{CP}^{n_i}$ for $1\le i\le l$. In this case, we may assume that $M=\mathbb{CP}^n$ for convenience. For fixed $p\in M$ and $\tau_i\to\infty$, we have $(M,g_{\tau_i}(t),p)$ will converge to a shrinking K\"{a}hler-Ricci soliton $(M_{\infty},g_{\infty}(t),p_{\infty})$ with nonnegative holomorphic bisectional curvature, where $g_{\tau_i}(t)=\tau_i^{-1}g(\tau_i t)$. We also note that $M_{\infty}$ is diffeomorphic to $M=\mathbb{CP}^n$. Therefore it is biholomorphic to $\mathbb{CP}^n$ by the generalized Frankel conjecture (cf. Theorem \ref{Frankel conjecture}). By Lemma \ref{uniqueness}, we know that $(M_{\infty},g_{\infty}(t))$ has positive constant holomorphic bisectional curvature for all $t<0$. By the convergence of $(M,g_{\tau_i}(t),p)$, we know that $g_{\tau_i}(t)$ also has positive holomorphic bisectional curvature for large $\tau_i$ and $t<0$. Since the positivity of bisectional cuvature is preserved along the flow, we know that $g(t)$ has positive holomorphic bisectional curvature for all $t$. By \cite{CT}, $g(t)$ blows up at some finite time $T$ and converge to a metric of  constant holomorphic bisectional curvature under rescaling as $t\to T$ (also see \cite{TZ3}). Note that the entropy invariant $\nu(M,g(t))$ is monotone along the flow.  By the forward and backward convergence of $g(t)$, the entropy invariant $\nu(M,g(t))$ is a constant along $g(t)$. Hence, $(M,g(t))$ is a shrinking K\"{a}hler-Ricci soliton with positive holomorphic bisectional curvature. Hence, $(M,g(t))$ has constant positive holomorphic bisectional curvature by Lemma \ref{uniqueness}.

\end{proof}

Theorem \ref{main theorem} follows  from Proposition \ref{prop-type I case} and Lemma \ref{lem-flow is type I} immediately.
\vskip6mm

\section*{References}

\small

\begin{enumerate}

\renewcommand{\labelenumi}{[\arabic{enumi}]}

\bibitem{BM} Bando, S. and Mabuchi, T., \textit{Uniqueness of Einstein-K\"{a}hler metrics modulo connected group actions}, in Algebraic Geometry (Sendai, 1985), pp. 11-40. Adv. Stud. Pure Math., 10. North-Holland, Amsterdam-NewYork, 1987.

\bibitem{Br} Brendle, S., \textit{Ancient solutions of the Ricci flow in dimension 3}, arXiv:1811.02559v1.

\bibitem{Ca1} Cao, H.D., \textit{Limits of solutions to the K\"{a}hler-Ricci flow},
J. Diff. Geom. \textbf{45}
(1997), 257-272.

\bibitem{Ca2} Cao, H.D., \textit{On dimension reduction in the K\"{a}hler-Ricci flow}, Comm.
Anal. Geom.
\textbf{12}, No. 1, (2004), 305-320.

\bibitem{CaCh} Cao, H.D, and Chow, B. \textit{Compact K\"{a}hler manifolds with nonnegative curvature operator}, Invent. Math. \textbf{83} (1986), 553-556.

\bibitem{CT}  Chen, X.X. and  Tian, G., \textit{Ricci flow on K\"{a}hler-Einstein manifolds}, Duke Math. J. \textbf{131} (2006), no. 1, 17-73.

\bibitem{DS}   Daskalopoulos, P. and Sesum, N., \textit{Eternal solutions to the Ricci flow on $\mathbb{R}^2$}, Int. Math. Res. Not. 2006, Art. ID 83610, 20 pp.

\bibitem{DHS}  Daskalopoulos, P., Hamilton, R. and Sesum, N., \textit{Classification of ancient compact solutions to the Ricci flow on surfaces}, J. Differential Geom. \textbf{91} (2012), no. 2, 171-214.

\bibitem{DZ2} Deng, Y.X. and Zhu, X.H., \textit{Asymptotic behavior of positively curved steady Ricci solitons}, arXiv:math/1507.04802.

\bibitem{DZ3} Deng, Y.X. and Zhu, X.H., \textit{Asymptotic behavior of positively curved steady Ricci solitons II}, arXiv:math/1604.00142.

\bibitem{G} Gu, H.L., \textit{A new proof of Mok's generalized Frankel Conjecture theorem}, Proc. Amer. Math. Soc. \textbf{137} (2009), no. 3, 1063-1068.

\bibitem{H1} Hamilton, R.S., \textit{Formation of singularities in the Ricci flow}, Surveys in Diff. Geom. \textbf{2} (1995),
7-136.

\bibitem{L} Lott, J., \textit{Some geometric properties of the Bakry-\'{E}mery-Ricci tensor}. Comment. Math. Helv. 78 (2003), no. 4, 865-883.

\bibitem{M} Mok, N.,\textit{The uniformization theorem for compact K\"{a}hler manifolds of nonnegative bisectional curvature}, J. Diff. Geom. \textbf{27} (1988), 179-214.

\bibitem{Ni} Ni, L., \textit{Closed type-I Ancient solutions to Ricci flow}, Recent Advances in Geometric Analysis, ALM, vol. 11 (2009), 147-150.

\bibitem{Na} Naber, A., \textit{Noncompact shrinking four solitons with nonnegative curvature}, J. Reine Angew Math. \textbf{645} (2010), 125-153.

\bibitem{Pe1} Perelman, G., \textit{The entropy formula for the Ricci flow and its geometric applications}, arXiv:math/0211159.

\bibitem{TZ1} Tian, G. and Zhu, X.H., \textit{Uniqueness of K\"{a}hler-Ricci solitons}, Acta. Math. \textbf{184} (2000), 271-305.

\bibitem{TZ2} Tian, G. and Zhu, X.H., \textit{A new holomorphic invariant and uniqueness of K\"{a}hler-Ricci solitons}, Comment. Math. Helv. \textbf{77} (2002), no.2 297-325.

\bibitem{TZ3} Tian, G. and Zhu, X. H., \textit{Convergence of the
K\"ahler-Ricci flow}, J. Amer Math. Soc., \textbf{17}
(2006), 675-699.

\end{enumerate}

\end{document}